\newcommand{\sysn}{\left\{\begin{array}{rcl}}
\newcommand{\sysk}{\end{array}\right.}
\newtheorem{theorem}{Theorem}[section]
\newtheorem{lemma}[theorem]{Lemma}
\theoremstyle{example}
\theoremstyle{definition}
\journal{...}
\begin{document}

\begin{frontmatter}

\title{A functional characterization of the Hurewicz property}


\author{Alexander V. Osipov}

\ead{OAB@list.ru}


\address{Krasovskii Institute of Mathematics and Mechanics, Ural Federal
 University,

 Ural State University of Economics, Yekaterinburg, Russia}

\begin{abstract} For a Tychonoff space $X$, we denote by $C_p(X)$
the space of all real-valued continuous functions on $X$ with the
topology of pointwise convergence.  We give the functional
characterization of the covering property of Hurewicz.

\end{abstract}

\begin{keyword}
$U_{fin}(\mathcal{O},\Omega)$ \sep Hurewicz property \sep
selection principles \sep $C_p$ theory \sep $U_{fin}(\mathcal{O},
\Gamma)$

\MSC[2010]  54C35 \sep 54C05 \sep 54C65   \sep 54A20

\end{keyword}

\end{frontmatter}



\section{Introduction}

Many topological properties are defined or characterized in terms
 of the following classical selection principles.
 Let $\mathcal{A}$ and $\mathcal{B}$ be sets consisting of
families of subsets of an infinite set $X$. Then:

$S_{1}(\mathcal{A},\mathcal{B})$ is the selection hypothesis: for
each sequence $(A_{n}: n\in \mathbb{N})$ of elements of
$\mathcal{A}$ there is a sequence $\{b_{n}\}_{n\in \mathbb{N}}$
such that for each $n$, $b_{n}\in A_{n}$, and $\{b_{n}:
n\in\mathbb{N} \}$ is an element of $\mathcal{B}$.

$S_{fin}(\mathcal{A},\mathcal{B})$ is the selection hypothesis:
for each sequence $(A_{n}: n\in \mathbb{N})$ of elements of
$\mathcal{A}$ there is a sequence $\{B_{n}\}_{n\in \mathbb{N}}$ of
finite sets such that for each $n$, $B_{n}\subseteq A_{n}$, and
$\bigcup_{n\in\mathbb{N}}B_{n}\in\mathcal{B}$.

$U_{fin}(\mathcal{A},\mathcal{B})$ is the selection hypothesis:
whenever $\mathcal{U}_1$, $\mathcal{U}_2, ... \in \mathcal{A}$ and
none contains a finite subcover, there are finite sets
$\mathcal{F}_n\subseteq \mathcal{U}_n$, $n\in \mathbb{N}$, such
that $\{\bigcup \mathcal{F}_n : n\in \mathbb{N}\}\in \mathcal{B}$.

The papers \cite{jmss,ko,sch3,sch1,tss1,bts,ts2} have initiated
the simultaneous
 consideration of these properties in the case where $\mathcal{A}$ and
 $\mathcal{B}$ are important families of open covers of a
 topological space $X$.

In papers [1-8, 12-32]
 (and many others) were investigated the applications of selection principles in the
study of the properties of function spaces. In particular, the
properties of the space $C_p(X)$ were investigated. In this paper
we continue to study different selectors for sequences of dense
sets of $C_p(X)$ and we give the functional characterization of
the covering property of Hurewicz.


\bigskip





\section{Main definitions and notation}

Throughout this paper, all spaces are assumed to be Tychonoff. The
set of positive integers is denoted by $\mathbb{N}$. Let
$\mathbb{R}$ be the real line and $\mathbb{Q}$ be the rational
numbers. For a space $X$, we denote by $C_p(X)$ the space of all
real-valued continuous functions on $X$ with the topology of
pointwise convergence. The symbol ${\bf 0}$ denote the constantly
zero function in $C_p(X)$. Because $C_p(X)$ is homogeneous we can
work with ${\bf 0}$ to study local properties of $C_p(X)$.

Basic open sets of $C_p(X)$ are of the form

$[x_1,...,x_k, U_1,...,U_k]=\{f\in C(X): f(x_i)\in U_i$,
$i=1,...,k\}$, where each $x_i\in X$ and each $U_i$ is a non-empty
open subset of $\mathbb{R}$. Sometimes we will write the basic
neighborhood of the point $f$ as $\langle f,A,\epsilon \rangle$
where $\langle f,A,\epsilon \rangle:=\{g\in C(X):
|f(x)-g(x)|<\epsilon$ $\forall x\in A\}$, $A$ is a finite subset
of $X$ and $\epsilon>0$.

 If $X$ is a space and $A\subseteq X$, then the sequential closure of $A$,
 denoted by $[A]_{seq}$, is the set of all limits of sequences
 from $A$. A set $D\subseteq X$ is said to be sequentially dense
 if $X=[D]_{seq}$. A space $X$ is called sequentially separable if
 it has a countable sequentially dense set.

In this paper, by a cover we mean a nontrivial one, that is,
$\mathcal{U}$ is a cover of $X$ if $X=\bigcup \mathcal{U}$ and
$X\notin \mathcal{U}$.

 An open cover $\mathcal{U}$ of a space $X$ is:

 $\bullet$ an {\it $\omega$-cover} if every finite subset of $X$ is contained in a
 member of $\mathcal{U}$.

$\bullet$ a {\it $\gamma$-cover} if it is infinite and each $x\in
X$ belongs to all but finitely many elements of $\mathcal{U}$.
Note that every $\gamma$-cover contains a countably
$\gamma$-cover.

$\bullet$ a {\it $\gamma_F$-shrinkable} $\gamma$-cover
$\mathcal{U}$ if it is a $\gamma$-cover $\mathcal{U}$ of co-zero
sets of $X$ and there exists a $\gamma$-cover $\{F(U) : U\in
\mathcal{U}\}$ of zero-sets of $X$ with $F(U)\subset U$ for every
$U\in \mathcal{U}$.

For a topological space $X$ we denote:

$\bullet$ $\mathcal{O}$ --- the family of all open covers of $X$;

$\bullet$ $\Gamma$ --- the family of all countable open
$\gamma$-covers of $X$;

$\bullet$ $\Omega$ --- the family of all open $\omega$-covers of
$X$;

$\bullet$ $\Gamma_F$ --- the family of all $\gamma_F$-shrinkable
$\gamma$-covers of $X$.

For a topological space $C_p(X)$ we denote:

$\bullet$ $\mathcal{D}$ --- the family of all dense subsets of
$C_p(X)$;

$\bullet$ $\mathcal{S}$ --- the family of all sequentially dense
subsets of $C_p(X)$.

\medskip

In the case of $U_{fin}$ note that for any class of covers
$\mathcal{B}$ of Lindel$\ddot{o}$f space $X$,
$U_{fin}(\mathcal{O}, \mathcal{B})$ is equivalent to
$U_{fin}(\Gamma, \mathcal{B})$ because given an open cover $\{U_n
: n\in \mathbb{N}\}$ we may replace it by $\{\bigcup_{i<n}  U_i :
n\in \mathbb{N}\}$, which is a $\gamma$-cover (unless it contains
a finite subcover) of $X$.

\bigskip

 We recall that a subset of $X$ that is the
 complete preimage of zero for a certain function from~$C(X)$ is called a zero-set.
A subset $O\subseteq X$  is called  a cozero-set of $X$ if
$X\setminus O$ is a zero-set.

\medskip
Recall that the $i$-weight $iw(X)$ of a space $X$ is the smallest
infinite cardinal number $\tau$ such that $X$ can be mapped by a
one-to-one continuous mapping onto a Tychonoff space of the weight
not greater than $\tau$.

\begin{theorem} (Noble \cite{nob}) \label{th31} A space $C_{p}(X)$ is separable iff
$iw(X)=\aleph_0$.
\end{theorem}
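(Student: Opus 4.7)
The plan is to prove both directions of this equivalence using the evaluation / dual map construction, which is the standard bridge between properties of $X$ and $C_p(X)$ in $C_p$-theory.

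For the direction $iw(X)=\aleph_0 \Rightarrow C_p(X)$ is separable, I would start with a one-to-one continuous surjection $f\colon X\to Y$ onto a Tychonoff space $Y$ of countable weight. Since $Y$ then has a countable base and is Tychonoff, $C_p(Y)$ has countable network weight (a countable base of co-zero sets of $Y$ generates a countable network in $C_p(Y)$ via standard subbase sets $[y,U]$), and in particular $C_p(Y)$ is separable; let $D=\{g_n\}\subset C_p(Y)$ be countable and dense. The dual map $f^{\ast}\colon C_p(Y)\to C_p(X)$ sending $g\mapsto g\circ f$ is continuous. The key step is to argue that its image is dense in $C_p(X)$: since $f$ is one-to-one and $Y$ is Tychonoff, the set $f^{\ast}(C(Y))$ is a subalgebra of $C(X)$ containing the constants and separating points of $X$, so by the Stone--Weierstrass theorem for the topology of pointwise convergence it is dense in $C_p(X)$. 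Then $f^{\ast}(D)=\{g_n\circ f\}$ is a countable set whose closure contains $f^{\ast}(\overline{D})=f^{\ast}(C_p(Y))$, hence equals $C_p(X)$.

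For the converse, I would assume $\{f_n:n\in\mathbb{N}\}$ is a countable dense subset of $C_p(X)$ and define the diagonal map
\[
F\colon X\to\mathbb{R}^{\mathbb{N}},\qquad F(x)=(f_n(x))_{n\in\mathbb{N}}.
\]
This map is continuous, and $\mathbb{R}^{\mathbb{N}}$ is a Tychonoff space of countable weight, so it suffices to show $F$ is one-to-one. If $F(x_1)=F(x_2)$ then $f_n(x_1)=f_n(x_2)$ for every $n$; the pointwise density of $\{f_n\}$ in $C_p(X)$ applied to basic neighbourhoods $\langle f,\{x_1,x_2\},\varepsilon\rangle$ yields $f(x_1)=f(x_2)$ for every $f\in C(X)$, and Tychonoff separation forces $x_1=x_2$. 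Letting $Y=F(X)\subseteq\mathbb{R}^{\mathbb{N}}$, this $Y$ has weight $\le\aleph_0$ and $F\colon X\to Y$ witnesses $iw(X)\le\aleph_0$.

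The only subtle point is the forward direction, specifically justifying that a countable dense subset of $C_p(Y)$ exists and that $f^{\ast}$ has dense image; both reduce to standard $C_p$-theoretic facts (countable network weight of $C_p(Y)$ when $w(Y)=\aleph_0$, and Stone--Weierstrass for pointwise convergence), so no real obstacle is expected, and the proof is short and symmetric in spirit.
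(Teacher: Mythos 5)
Your argument is correct and is the standard proof of Noble's theorem: the diagonal/evaluation map $F=\Delta f_n$ gives the condensation in one direction, and in the other the dual map $f^{\ast}$ has dense image by the pointwise Stone--Weierstrass argument (on each finite $K\subseteq X$ the restriction of the separating subalgebra is a linear subspace of $\mathbb{R}^{K}$ that is dense, hence all of $\mathbb{R}^{K}$). The paper itself states this result only as a citation to Noble and gives no proof, so there is nothing to compare beyond noting that your route is the classical one and has no gaps.
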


\medskip

Let $X$ be a topological space, and $x\in X$. A subset $A$ of $X$
{\it converges} to $x$, $x=\lim A$, if $A$ is infinite, $x\notin
A$, and for each neighborhood $U$ of $x$, $A\setminus U$ is
finite. Consider the following collection:

$\bullet$ $\Omega_x=\{A\subseteq X : x\in \overline{A}\setminus
A\}$;

$\bullet$ $\Gamma_x=\{A\subseteq X : x=\lim A\}$.

Note that if $A\in \Gamma_x$, then there exists $\{a_n\}\subset A$
converging to $x$. So, simply $\Gamma_x$ may be the set of
non-trivial convergent sequences to $x$.

\bigskip

We write $\Pi (\mathcal{A}_x, \mathcal{B}_x)$ without specifying
$x$, we mean $(\forall x) \Pi (\mathcal{A}_x, \mathcal{B}_x)$.

 So we have three types of topological properties
described through the selection principles:

$\bullet$  local properties of the form $S_*(\Phi_x,\Psi_x)$;

$\bullet$  global properties of the form $S_*(\Phi,\Psi)$;

$\bullet$  semi-local of the form $S_*(\Phi,\Psi_x)$.

\section{$U_{fin}(\mathcal{O},\Omega)$ }

For a function space $C_p(X)$, we represent the next selection
principle

$F_{fin}(\mathcal{S},\mathcal{D})$:  whenever $\mathcal{S}_1$,
$\mathcal{S}_2, ... \in \mathcal{S}$, there are finite sets
$\mathcal{F}_n\subseteq \mathcal{S}_n$, $n\in \mathbb{N}$, such
that for each $f\in C_p(X)$ and a base neighborhood $\langle f, K,
\epsilon
 \rangle $ of $f$ where $\epsilon>0$ and $K=\{x_1, ..., x_k\}$ is a finite
subset of $X$, there is $n'\in \mathbb{N}$ such that for each
$j\in \{1,...,k\}$ there is $g\in \mathcal{F}_{n'}$ such that
$g(x_j)\in (f(x_j)-\epsilon, f(x_j)+\epsilon)$.

\medskip

It is clear that the condition of the selection principle
$F_{fin}(\mathcal{S},\mathcal{D})$ can be written more briefly:
whenever $\mathcal{S}_1$, $\mathcal{S}_2, ... \in \mathcal{S}$,
there are finite sets $\mathcal{F}_n\subseteq \mathcal{S}_n$,
$n\in \mathbb{N}$, such that for each $f\in C_p(X)$, $\epsilon>0$
and $K$ is a finite subset of $X$, there is $n'\in \mathbb{N}$
such that $\min\limits_{h\in \mathcal{F}_{n'}}
\{|f(x)-h(x)|\}<\epsilon$ for each $x\in K$.

Similarly, $F_{fin}(\Gamma_0,\Omega_0)$: whenever $\mathcal{S}_1$,
$\mathcal{S}_2, ... \in \Gamma_0$, there are finite sets
$\mathcal{F}_n\subseteq \mathcal{S}_n$, $n\in \mathbb{N}$, such
that for $\epsilon>0$ and $K$ is a finite subset of $X$, there is
$n'\in \mathbb{N}$ such that $\min\limits_{h\in \mathcal{F}_{n'}}
\{|h(x)|\}<\epsilon$ for each $x\in K$.

\begin{theorem}\label{th198} For a   space $X$, the following statements are
equivalent:

\begin{enumerate}

\item $C_p(X)$ satisfies $F_{fin}(\Gamma_{\bf0},\Omega_{\bf0})$;

\item $X$ satisfies $U_{fin}(\Gamma_F, \Omega)$.

\end{enumerate}

\end{theorem}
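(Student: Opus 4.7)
The theorem is a standard-form duality between a function-space selection principle and a covering principle on $X$; the guiding dictionary is that for a continuous $f : X\to\mathbb{R}$ the set $\{x : |f(x)|<\epsilon\}$ is cozero with zero-set shrinking $\{x : |f(x)|\le\epsilon/2\}$, and pointwise convergence $f_k\to\mathbf{0}$ makes $\{\{|f_k|<\epsilon\}\}_k$ a $\gamma_F$-shrinkable $\gamma$-cover of $X$ for every fixed $\epsilon>0$. Conversely, given a $\gamma_F$-shrinkable $\gamma$-cover $\mathcal{U}=\{U_k\}$ with shrinking $\{F_k\}$, since $F_k$ and $X\setminus U_k$ are disjoint zero sets there exist continuous $f_k : X\to[0,1]$ with $f_k|_{F_k}\equiv 0$ and $f_k|_{X\setminus U_k}\equiv 1$, and the $\gamma$-cover property of $\{F_k\}$ forces $f_k\to\mathbf{0}$ in $C_p(X)$. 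The plan is to run both implications through this dictionary, with additional care in $(2)\Rightarrow(1)$ to handle every scale $\epsilon$ from a principle that produces $\omega$-covers one scale at a time.

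For $(1)\Rightarrow(2)$, from $\gamma_F$-shrinkable $\gamma$-covers $\mathcal{U}_n=\{U^n_k\}_k$ with shrinkings $\{F^n_k\}_k$, build $\mathcal{S}_n=\{f^n_k\}_k\in\Gamma_{\mathbf{0}}$ as above; the $f^n_k$ can be arranged distinct, and $\mathbf{0}\notin\mathcal{S}_n$ because $X\notin\mathcal{U}_n$. Apply $F_{fin}(\Gamma_{\mathbf{0}},\Omega_{\mathbf{0}})$ to obtain finite $\mathcal{F}_n\subseteq\mathcal{S}_n$, and set $\mathcal{G}_n=\{U^n_k : f^n_k\in\mathcal{F}_n\}$. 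For any finite $K\subset X$, invoking $F_{fin}$ at $\epsilon=1/2$ returns some $n'$ such that each $x\in K$ admits $f^{n'}_k\in\mathcal{F}_{n'}$ with $|f^{n'}_k(x)|<1/2$; since $f^{n'}_k\equiv 1$ outside $U^{n'}_k$, this forces $x\in U^{n'}_k$, whence $K\subseteq\bigcup\mathcal{G}_{n'}$, proving that $\{\bigcup\mathcal{G}_n : n\in\mathbb{N}\}\in\Omega$.

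For $(2)\Rightarrow(1)$, enumerate each $\mathcal{S}_n=\{f^n_k\}_k$ and, for every $(b,n)\in\mathbb{N}^2$, form the $\gamma_F$-shrinkable $\gamma$-cover $\mathcal{U}^b_n=\{\{|f^n_k|<1/b\}\}_k$ with shrinking $\{\{|f^n_k|\le 1/(2b)\}\}_k$ (any $k$ for which the listed member equals $X$ is harmlessly discarded). The crux is a scale--index mismatch: a single application of $U_{fin}(\Gamma_F,\Omega)$ to $\{\mathcal{U}^b_n\}_n$ yields an $\omega$-cover whose witnessing index is arbitrary, so the naive diagonalization $\mathcal{F}_n=\bigcup_{b\le n}(\cdots)$ cannot guarantee a witness $n'\ge b$. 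The remedy is to apply $U_{fin}(\Gamma_F,\Omega)$ once for every pair $(b,s)$ to the shifted sequence $\{\mathcal{U}^b_n\}_{n\ge s}$, producing finite $\mathcal{G}^{b,s}_n\subseteq\mathcal{U}^b_n$ for $n\ge s$ whose unions form an $\omega$-cover of $X$. Then set
$$
\mathcal{F}_n=\bigcup_{b,s\le n}\bigl\{f^n_k : \{|f^n_k|<1/b\}\in\mathcal{G}^{b,s}_n\bigr\},
$$
a finite subset of $\mathcal{S}_n$. Given $\epsilon>0$ and finite $K\subset X$, pick $b$ with $1/b<\epsilon$ and take $s=b$: the corresponding $\omega$-cover gives $n'\ge b$ with $K\subseteq\bigcup\mathcal{G}^{b,b}_{n'}$, and since $b,s\le n'$ the witnessing functions all lie in $\mathcal{F}_{n'}$, so each $x\in K$ enjoys $|f^{n'}_k(x)|<1/b<\epsilon$ for some $f^{n'}_k\in\mathcal{F}_{n'}$. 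The essential obstacle is exactly this cofinality tension between scales and indices; the shifted-tail applications of $U_{fin}$ are the device that restores the needed cofinality.
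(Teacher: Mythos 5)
Your proof is correct, and while your $(1)\Rightarrow(2)$ coincides with the paper's argument (the same functions vanishing on the zero-set shrinking and equal to $1$ off the cozero member, tested at $\epsilon=\frac{1}{2}$), your $(2)\Rightarrow(1)$ runs on a genuinely different mechanism. The paper makes a \emph{single} application of $U_{fin}(\Gamma_F,\Omega)$ after tying the scale to the sequence index: it takes $\mathcal{U}_i=\{f^{-1}(-\frac{1}{i},\frac{1}{i}) : f\in A_i\}$ with zero-set shrinking at radius $\frac{1}{i+1}$, and then resolves exactly the scale--index tension you point out by the (tacit) fact that in a nontrivial $\omega$-cover every finite set is contained in members of arbitrarily large index, which lets it choose $i_1>i_0$ with $\frac{1}{i_0}<\epsilon$ and $K\subseteq\bigcup\mathcal{F}_{i_1}$. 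You instead keep the scale $\frac{1}{b}$ fixed within each application and restore cofinality by invoking the hypothesis countably many times, once per pair $(b,s)$, on the shifted tails $\{\mathcal{U}^b_n\}_{n\ge s}$, then merging finitely many selections into $\mathcal{F}_n$; this costs countably many applications of $U_{fin}$ (harmless) but avoids the unstated ``infinitely many members of an $\omega$-cover contain $K$'' observation on which the paper's shorter route leans. Two points to tighten: (i) take only one witness function per selected member of $\mathcal{G}^{b,s}_n$, since distinct $f^n_k$ may induce the same cozero set and the witness set as you display it need not be finite; (ii) ``harmlessly discarded'' does not cover the degenerate case in which all but finitely many members of some $\mathcal{U}^b_n$ equal $X$ --- then the trimmed family is not in $\Gamma_F$ and that application of $U_{fin}$ is not licensed --- but in that case some $f^n_k$ satisfies $|f^n_k(x)|<\frac{1}{b}$ for every $x\in X$, and putting one such function into $\mathcal{F}_n$ settles every finite $K$ at scale $\frac{1}{b}$ directly (the paper dispatches its analogous degenerate case, where some $U_{i,f}=X$, with a remark of the same kind).
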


\begin{proof}

 $(1)\Rightarrow(2)$. Let $\{\mathcal{U}_i\}\subset \Gamma_F$, $\mathcal{U}_i=\{U^m_i\}$ for each $i\in
 \mathbb{N}$. We consider $\mathcal{K}_i =\{ f^m_i\in
C(X) : f^m_i\upharpoonright F(U^{m}_i)=0$ and
$f^m_i\upharpoonright (X\setminus U^{m}_i)=1$ for $m \in
\mathbb{N} \}$.

Since $\mathcal{F}_i=\{F(U^{m}_i): m\in \mathbb{N}\}$ is a
$\gamma$-cover of zero-sets of $X$, we have that $\mathcal{K}_i$
converge to $\bf{0}$ for each $i\in \mathbb{N}$. By $C_p(X)$
satisfies $F_{fin}(\Gamma_{\bf0},\Omega_{\bf0})$, there are finite
sets $F_i=\{f^{m_1}_i, ..., f^{m_{s(i)}}_i\}\subseteq
\mathcal{K}_i$ such that for a base neighborhood $O(f)=\langle f,
K, \epsilon
 \rangle $ of $f={\bf 0}$  where $\epsilon>0$
and $K=\{x_1, ..., x_k\}$ is a finite subset of $X$, there is
$n'\in \mathbb{N}$ such that for each $j\in \{1,...,k\}$ there is
$g\in \mathcal{F}_{n'}$ such that $g(x_j)\in (f(x_j)-\epsilon,
f(x_j)+\epsilon)$. Note that $\{\bigcup \{U^{m_1}_i, ...,
U^{m_{s(i)}}_i\}  : i\in \mathbb{N}\}\in \Omega$.

 $(2)\Rightarrow(1)$. Let $X$ satisfies $U_{fin}(\Gamma_F, \Omega)$
and $A_i\in \Gamma_{\bf{0}}$ for each $i\in \mathbb{N}$. Consider
$\mathcal{U}_i=\{U_{i,f}=f^{-1}(-\frac{1}{i}, \frac{1}{i}): f\in
A_i \}$ for each $i\in \mathbb{N}$. Without loss of generality we
can assume that a set $U_{i,f}\neq X$ for any $i\in \mathbb{N}$
and $f\in A_i$. Otherwise there is sequence $\{f_{i_k}\}_{k\in
\mathbb{N}}$ such that $\{f_{i_k}\}_{k\in \mathbb{N}}$ uniform
converge to $\bf{0}$ and $\{f_{i_k}: k\in \mathbb{N}\}\in
\Omega_{\bf 0}$.

Note that $\mathcal{F}_i=\{F_{i,m}\}_{m\in
\mathbb{N}}=\{f^{-1}_{i,m}[-\frac{1}{i+1}, \frac{1}{i+1}]: m\in
\mathbb{N} \}$ is $\gamma$-cover of zero-sets of $X$ and
$F_{i,m}\subset U_{i,m}$ for each $i,m\in \mathbb{N}$. It follows
that $\mathcal{U}_i\in \Gamma_F$ for each $i\in \mathbb{N}$.

By $X$ satisfies $U_{fin}(\Gamma_F, \Omega)$, there is a sequence
$\{U_{i,m(1)}, U_{i,m(2)}, ..., U_{i,m(i)}: i\in\mathbb{N}\}$ such
that for each $i$ and $k\in\{m(1),...,m(i)\}$, $U_{i,m(k)}\in
\mathcal{U}_i$, and

$\{\bigcup \{U_{i,m(1)}, ..., U_{i,m(i)}\}: i\in \mathbb{N}\}\in
\Omega$.

 Let $\langle {\bf 0}, K, \epsilon \rangle $ be a base neighborhood of
$\bf 0$ where $\epsilon>0$ and $K=\{x_1, ..., x_s\}$ is a finite
subset of $X$, then there are $i_0, i_1\in \mathbb{N}$ such that
$\frac{1}{i_0}<\epsilon$, $i_1>i_0$ and $\bigcup_{k=m(1)}^{m(i_1)}
U_{i_1,k}\supset K$. It follows that for each $j\in \{1,...,s\}$
there is $g\in \{f_{i_1,m(1)}, ..., f_{i_1,m(i_1)}\}$ such that
$g(x_j)\in (-\epsilon, \epsilon)$.

\end{proof}

\begin{lemma}(Lemma 6.5 in \cite{os2})\label{lemma} Let $\mathcal{U}=\{U_n:n\in \mathbb{N}\}$ be a
$\gamma_F$-shrinkable co-zero cover of a space $X$. Then the set
$S=\{f\in C(X): f\upharpoonright (X\setminus U_n)\equiv 1$ for
some $n\in \mathbb{N}\}$ is sequentially dense in $C_p(X)$.
\end{lemma}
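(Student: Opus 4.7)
The plan is, for an arbitrary $f\in C(X)$, to exhibit a sequence $(f_n)_{n\in\mathbb{N}}\subset S$ with $f_n\to f$ in $C_p(X)$; this gives $C(X)\subseteq[S]_{seq}$. The construction proceeds in three movements: extract the zero-set shrinking from the hypothesis, build auxiliary separation functions, and interpolate between $f$ and the constant function $1$.

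First, by the definition of $\gamma_F$-shrinkability, fix a $\gamma$-cover of zero-sets $\{F(U_n):n\in\mathbb{N}\}$ with $F(U_n)\subset U_n$. For every $n$, the set $X\setminus U_n$ is a zero-set (since $U_n$ is a cozero-set) and is disjoint from the zero-set $F(U_n)$. Any two disjoint zero-sets $Z_0,Z_1$ in a Tychonoff space are separated by a continuous $[0,1]$-valued function (for instance $|g_0|/(|g_0|+|g_1|)$ when $Z_i=g_i^{-1}(0)$), so choose $\phi_n\in C(X,[0,1])$ with $\phi_n\equiv 0$ on $F(U_n)$ and $\phi_n\equiv 1$ on $X\setminus U_n$.

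Next, put $f_n:=(1-\phi_n)f+\phi_n$. This is continuous. On $X\setminus U_n$ one has $\phi_n\equiv 1$, hence $f_n\equiv 1$ on $X\setminus U_n$; therefore $f_n\in S$ for every $n$. To verify pointwise convergence, fix $x\in X$: because $\{F(U_n):n\in\mathbb{N}\}$ is a $\gamma$-cover, there exists $N$ with $x\in F(U_n)$ for every $n\ge N$, whence $\phi_n(x)=0$ and $f_n(x)=f(x)$. Thus $f_n\to f$ in $C_p(X)$, i.e.\ $f\in[S]_{seq}$, which proves the lemma.

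The only step that is not entirely mechanical is the interpolation formula $f_n=(1-\phi_n)f+\phi_n$: it must simultaneously pin $f_n$ to the value $1$ on $X\setminus U_n$ (to land inside $S$) and agree with $f$ on $F(U_n)$ (so that the $\gamma$-cover property of the shrinking drives the convergence). Everything else reduces to the definitions and the standard separation of disjoint zero-sets in a Tychonoff space.
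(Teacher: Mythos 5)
Your proof is correct and is essentially the standard argument for this lemma (which the paper only cites from \cite{os2} without reproducing a proof): one separates the disjoint zero-sets $F(U_n)$ and $X\setminus U_n$ by a function $\phi_n\in C(X,[0,1])$ and interpolates, $f_n=(1-\phi_n)f+\phi_n$, so that $f_n\in S$ while the $\gamma$-cover property of $\{F(U_n)\}$ forces $f_n(x)=f(x)$ for all but finitely many $n$ at each point $x$. All steps check out, including the positivity of $|g_0|+|g_1|$ on all of $X$ because the two zero-sets are disjoint.
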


\begin{theorem}\label{th195} For a space $X$ with $iw(X)=\aleph_0$, the following statements are equivalent:

\begin{enumerate}

\item $C_p(X)$ satisfies $F_{fin}(\mathcal{S},\mathcal{D})$;

\item $X$ satisfies $U_{fin}(\Gamma_F, \Omega)$;

\item $C_p(X)$ satisfies $F_{fin}(\Gamma_{\bf0}, \Omega_{\bf0})$;

\item $C_p(X)$ satisfies $F_{fin}(\mathcal{S}, \Omega_{\bf0})$.

\end{enumerate}

\end{theorem}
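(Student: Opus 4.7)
The plan is to close the cycle $(1)\Rightarrow(4)\Rightarrow(2)\Leftrightarrow(3)\Rightarrow(4)$ and, separately, to prove $(4)\Rightarrow(1)$. The equivalence $(2)\Leftrightarrow(3)$ is exactly Theorem~\ref{th198}, and $(1)\Rightarrow(4)$ is immediate, since the $\Omega_{\bf 0}$-condition of (4) is nothing but the $\mathcal{D}$-condition of (1) specialised to the single function ${\bf 0}$.

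For $(4)\Rightarrow(2)$, I would start from $\mathcal{U}_i=\{U_i^m:m\in\mathbb{N}\}\in\Gamma_F$, attach via Lemma~\ref{lemma} the sequentially dense set $\mathcal{S}_i=\{f\in C(X): f\!\upharpoonright\!(X\setminus U_i^m)\equiv 1\text{ for some }m\}$, and apply (4) to the sequence $(\mathcal{S}_i)$. Choosing $\epsilon=1/2$ in the $\Omega_{\bf 0}$-condition forces each $x$ of a prescribed finite $K\subseteq X$ to lie in some $U_{n'}^{m(h)}$ witnessing $h\in F_{n'}$, so $\{\bigcup\{U_i^{m(h)}:h\in F_i\}:i\in\mathbb{N}\}$ is the required $\omega$-cover. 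For $(3)\Rightarrow(4)$, given sequentially dense $\mathcal{S}_i$, I would either take $F_{i_0}=\{{\bf 0}\}$ when ${\bf 0}\in\mathcal{S}_{i_0}$ (which by itself trivialises the $\Omega_{\bf 0}$-condition), or else extract an infinite sequence $A_i\subseteq\mathcal{S}_i\setminus\{{\bf 0}\}$ converging to ${\bf 0}$, so $A_i\in\Gamma_{\bf 0}$, and apply (3).

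The main technical step, and the only place where the hypothesis $iw(X)=\aleph_0$ is essential, is $(4)\Rightarrow(1)$. Noble's Theorem~\ref{th31} supplies a countable dense set $D=\{d_k:k\in\mathbb{N}\}\subseteq C_p(X)$. The plan is to partition $\mathbb{N}=\bigsqcup_k N_k$ into countably many infinite pieces and, for each $k$, apply (4) to the translated family $(\mathcal{S}_n-d_k)_{n\in N_k}$, which is again a sequence of sequentially dense sets because translation by $d_k$ is a homeomorphism of the topological group $C_p(X)$. Writing $G_n\subseteq\mathcal{S}_n-d_k$ for the output on $N_k$ and setting $F_n=G_n+d_k\subseteq\mathcal{S}_n$ gives a finite subset $F_n\subseteq\mathcal{S}_n$ for every $n\in\mathbb{N}$. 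Given $f\in C_p(X)$, $\epsilon>0$, and finite $K\subseteq X$, I would pick $d_k$ with $|d_k(x)-f(x)|<\epsilon/2$ on $K$, invoke the $\Omega_{\bf 0}$-conclusion for $N_k$ with tolerance $\epsilon/2$ to obtain $n'\in N_k$ with $\min_{h\in G_{n'}}|h(x)|<\epsilon/2$ on $K$, and finish by the triangle inequality. The main obstacle is precisely this synchronisation: the finite sets $F_n$ must serve every $f\in C_p(X)$ at once, and the disjoint partition of $\mathbb{N}$ combined with the countability of $D$ is exactly what makes that construction go through.
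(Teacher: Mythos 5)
Your proposal is correct and follows essentially the same route as the paper: Lemma~\ref{lemma} to pass from $\Gamma_F$-covers to sequentially dense sets, Theorem~\ref{th198} for $(2)\Leftrightarrow(3)$, and for $(4)\Rightarrow(1)$ the same partition of the index set combined with a countable dense set from Noble's theorem and a triangle-inequality argument (the paper's double enumeration $\{S_{n,m}\}$ is exactly your partition $\mathbb{N}=\bigsqcup_k N_k$, with the translation by $d_k$ left implicit via homogeneity). Your minor reorganization of the cycle (proving $(4)\Rightarrow(2)$ rather than $(1)\Rightarrow(2)$, and spelling out why $(3)\Rightarrow(4)$ is immediate) uses the identical constructions and changes nothing of substance.
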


\begin{proof} $(1)\Rightarrow(2)$. Let $\mathcal{U}_i=\{U^j_i: j\in \mathbb{N}\}\in \Gamma_F$ for
each $i\in \mathbb{N}$. Then, by Lemma \ref{lemma}, each
$S_i=\{f\in C(X): f\upharpoonright(X\setminus U^j_i)\equiv 1$ for
some $m\in \mathbb{N}\}$ is sequentially dense in $C_p(X)$. By
$C_p(X)$ satisfies $F_{fin}(\mathcal{S},\mathcal{D})$, there are
finite sets $F_i=\{f^{m_1}_i, ..., f^{m_{s(i)}}_i\}\subseteq
\mathcal{S}_i$ such that for each $f\in C_p(X)$ and a base
neighborhood $\langle f, K, \epsilon
 \rangle $ of $f$ where $\epsilon>0$ and $K=\{x_1, ..., x_k\}$ is a finite
subset of $X$, there is $n'\in \mathbb{N}$ such that for each
$j\in \{1,...,k\}$ there is $g\in F_{n'}$ such that $g(x_j)\in
(f(x_j)-\epsilon, f(x_j)+\epsilon)$. Note that $\{\bigcup
\{U^{m_1}_i, ..., U^{m_{s(i)}}_i\} : i\in \mathbb{N}\}\in \Omega$.

$(2)\Rightarrow(3)$. By  Theorem \ref{th198}.

$(3)\Rightarrow(4)$ is immediate.

$(4)\Rightarrow(1)$. Suppose that $C_p(X)$ satisfies
$F_{fin}(\mathcal{S}, \Omega_{\bf0})$.

Let $D=\{d_n: n\in \mathbb{N} \}$ be a dense subspace of $C_p(X)$
and  $S_i\in \mathcal{S}$ for each $i\in \mathbb{N}$. Given a
sequence of sequentially dense subspace of $C_p(X)$, enumerate it
as $\{S_{n,m}: n,m \in \mathbb{N} \}$. For each $n\in \mathbb{N}$,
pick

$\mathcal{F}_{n,m}=\{d_{n,m,1},..., d_{n,m,k(n,m)} \} \subset
S_{n,m}$ so that for a base neighborhood $\langle d_n, K, \epsilon
 \rangle $ of $d_n$  where $\epsilon>0$
and $K=\{x_1, ..., x_k\}$ is a finite subset of $X$, there is
$m'\in \mathbb{N}$ such that for each $j\in \{1,...,k\}$ there is
$g\in \mathcal{F}_{n,m'}$ such that $g(x_j)\in (d_n(x_j)-\epsilon,
d_n(x_j)+\epsilon)$. It follows that $C_p(X)$ satisfies
$F_{fin}(\mathcal{S},\mathcal{D})$.

\end{proof}

\begin{theorem} For a Tychonoff space $X$ the
following statements are equivalent:

\begin{enumerate}

\item $X$ is Lindel$\ddot{o}$f and $X$ satisfies
$U_{fin}(\Gamma_F, \Omega)$;

\item $X$ satisfies $U_{fin}(\mathcal{O}, \Omega)$.

\end{enumerate}

\end{theorem}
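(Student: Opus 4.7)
The plan is to treat the two implications separately, with essentially all the substance in $(1) \Rightarrow (2)$.

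For $(2) \Rightarrow (1)$, two quick observations suffice. Every $\gamma_F$-shrinkable $\gamma$-cover is in particular an open cover, so $\Gamma_F \subseteq \mathcal{O}$ and hence $U_{fin}(\mathcal{O},\Omega)$ immediately yields $U_{fin}(\Gamma_F,\Omega)$. To obtain Lindel\"ofness, I would apply $U_{fin}(\mathcal{O},\Omega)$ to the constant sequence $\mathcal{U}_n = \mathcal{U}$, for any open cover $\mathcal{U}$ without finite subcover; the resulting countable family $\bigcup_n \mathcal{F}_n \subseteq \mathcal{U}$ covers $X$, being an $\omega$-cover in particular.

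For $(1) \Rightarrow (2)$, the strategy is to convert an arbitrary sequence $\{\mathcal{U}_n\}$ of open covers (without finite subcovers) of $X$ into a sequence of $\gamma_F$-shrinkable $\gamma$-covers whose members lie in finite unions of $\mathcal{U}_n$-members. By Lindel\"ofness, enumerate $\mathcal{U}_n = \{U_{n,k} : k \in \mathbb{N}\}$. For each $x \in X$ pick $\ell(n,x)$ with $x \in U_{n,\ell(n,x)}$, and by the Tychonoff property find a continuous $h_{n,x} : X \to [0,1]$ with $h_{n,x}(x) = 1$ and $h_{n,x} \equiv 0$ outside $U_{n,\ell(n,x)}$, producing a co-zero set $W_{n,x} := h_{n,x}^{-1}((0,1])$ and a zero set $Z_{n,x} := h_{n,x}^{-1}([1/2,1])$ with $x \in Z_{n,x} \subseteq W_{n,x} \subseteq U_{n,\ell(n,x)}$. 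A second use of Lindel\"ofness extracts a countable subcover $\{Z_{n,x_k^n} : k \in \mathbb{N}\}$ of $X$, and taking increasing finite unions $W'_{n,k} := \bigcup_{i \leq k} W_{n,x_i^n}$ and $F_{n,k} := \bigcup_{i \leq k} Z_{n,x_i^n}$ yields a $\gamma$-cover $\mathcal{V}_n := \{W'_{n,k} : k \in \mathbb{N}\}$ of co-zero sets shrunken by the zero sets $F_{n,k}$. The properness $W'_{n,k} \neq X$ is automatic, since each $W'_{n,k}$ sits in a finite union of $\mathcal{U}_n$-members and $\mathcal{U}_n$ has no finite subcover; hence $\mathcal{V}_n \in \Gamma_F$.

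Finally, I would apply $U_{fin}(\Gamma_F,\Omega)$ to $\{\mathcal{V}_n\}$ to obtain finite $\mathcal{H}_n \subseteq \mathcal{V}_n$ with $\{\bigcup \mathcal{H}_n : n \in \mathbb{N}\} \in \Omega$. Since each $W'_{n,k} \in \mathcal{H}_n$ is contained in a finite subfamily of $\mathcal{U}_n$ by construction, I assemble a finite $\mathcal{F}_n \subseteq \mathcal{U}_n$ with $\bigcup \mathcal{H}_n \subseteq \bigcup \mathcal{F}_n$; then $\{\bigcup \mathcal{F}_n : n \in \mathbb{N}\}$ retains the $\omega$-cover property and yields $U_{fin}(\mathcal{O},\Omega)$. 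The main obstacle is the shrinking construction in the middle paragraph, namely producing a sequence of co-zero $\gamma$-covers with zero-set shrinkages that still control finite unions of the original open cover. Both Lindel\"ofness and the Tychonoff property are essential here, and without the Lindel\"of hypothesis in (1) the conversion to $\Gamma_F$-covers breaks down.
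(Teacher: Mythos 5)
Your argument is correct and is essentially the paper's own: the paper proves this theorem ``similarly to the proof of Theorem \ref{thur}'', i.e.\ by refining each open cover with nested co-zero/zero sets around each point, extracting a countable subfamily by Lindel$\ddot{o}$fness, passing to increasing finite unions to obtain a $\gamma_F$-shrinkable $\gamma$-cover that refines finite unions of members of $\mathcal{U}_n$, and then pulling the $U_{fin}(\Gamma_F,\Omega)$-selection back, exactly as you do (your $(2)\Rightarrow(1)$ direction is also the standard easy one). The single point to tighten is that Lindel$\ddot{o}$fness applies to \emph{open} covers, so the countable subfamily should be extracted from the open sets $h_{n,x}^{-1}((1/2,1])$ sitting inside the zero-sets $Z_{n,x}$ rather than from the $Z_{n,x}$ themselves --- this is precisely why the paper's version of the construction carries three nested sets $W_{0,n,U,x}\subset W_{1,n,U,x}\subset W_{2,n,U,x}$ instead of your two.
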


\begin{proof} It is proved similarly to the proof of
Theorem \ref{thur}.

\end{proof}

\begin{theorem}\label{th1951} For a separable metrizable space $X$, the following statements are
equivalent:

\begin{enumerate}

\item $C_p(X)$ satisfies $F_{fin}(\mathcal{S},\mathcal{D})$;

\item $X$ satisfies $U_{fin}(\mathcal{O}, \Omega)$;

\item $C_p(X)$ satisfies $F_{fin}(\Gamma_{\bf0}, \Omega_{\bf0})$;

\item $C_p(X)$ satisfies $F_{fin}(\mathcal{S}, \Omega_{\bf0})$.

\end{enumerate}

\end{theorem}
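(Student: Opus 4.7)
The plan is to derive Theorem \ref{th1951} as an immediate corollary of Theorem \ref{th195} together with the theorem stated just before it, by exploiting standard properties of separable metrizable spaces. No new analytical machinery should be necessary.

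First, I would observe that a separable metrizable space $X$ is second countable; this has two consequences that feed straight into the hypotheses of the two preceding theorems. On one hand, $X$ is Lindel\"of (every second countable space is Lindel\"of). On the other hand, $iw(X)=\aleph_0$: the identity map on $X$ is already a one-to-one continuous map onto a Tychonoff space of countable weight, so the infimum defining $iw(X)$ is at most $\aleph_0$, hence equal to it.

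Next, having verified these two structural facts, I would invoke Theorem \ref{th195}. Its standing hypothesis $iw(X)=\aleph_0$ is now satisfied, so statements (1), (3), (4) of the present theorem are pairwise equivalent, and each is equivalent to $X$ satisfying $U_{fin}(\Gamma_F,\Omega)$. I would then invoke the theorem immediately preceding Theorem \ref{th1951}: since $X$ is Lindel\"of, $U_{fin}(\Gamma_F,\Omega)$ for $X$ is equivalent to $U_{fin}(\mathcal{O},\Omega)$ for $X$, which is exactly condition (2). Concatenating the two equivalence chains yields the four-way equivalence.

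I do not anticipate any genuine obstacle. All the real work — the passage between $\gamma_F$-shrinkable covers and sequences converging to $\mathbf{0}$ in $C_p(X)$ (Theorem \ref{th198} and Lemma \ref{lemma}), the reduction of sequentially dense to convergent via Theorem \ref{th195}, and the Lindel\"of-based upgrade from $\Gamma_F$ to $\mathcal{O}$ (the preceding theorem) — has already been done. The only thing to check is that the two hypotheses ``Lindel\"of'' and ``$iw(X)=\aleph_0$'' are automatic in the separable metrizable setting, which is a standard textbook observation. Consequently, the proof amounts to a short citation of the two previous theorems, and I would expect the write-up to read ``Follows immediately from Theorem \ref{th195} and the preceding theorem, since separable metrizable spaces are Lindel\"of and have $iw=\aleph_0$.''
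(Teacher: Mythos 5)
Your proposal is correct and matches the paper's intended argument: the paper states Theorem \ref{th1951} without proof precisely because it follows from Theorem \ref{th195} and the preceding theorem once one notes that a separable metrizable space is Lindel\"of and satisfies $iw(X)=\aleph_0$. Nothing further is needed.
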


\section{$U_{fin}(\mathcal{O}, \Gamma)$ - Hurewicz property}

In \cite{hur1} (see also \cite{hur2}), Hurewicz introduced a
covering property of a space $X$, nowadays called the {\it
Hurewicz property} in this way: for each sequence $(U_n : n\in
\mathbb{N})$ of open covers of $X$ there is a sequence $(V_n :
n\in \mathbb{N})$ such that for each $n\in \mathbb{N}$, $V_n$ is a
finite subset of $U_n$ and each $x\in X$ belongs to $\cup V_n$ for
all but finitely many $n$ (i.e., $X$ satisfies
$U_{fin}(\mathcal{O}, \Gamma)$).

\begin{theorem}\label{thur} For a Tychonoff space $X$ the
following statements are equivalent:

\begin{enumerate}

\item $X$ is Lindel$\ddot{o}$f and satisfies $U_{fin}(\Gamma_F,
\Gamma)$;

\item $X$ has the Hurewicz property.

\end{enumerate}

\end{theorem}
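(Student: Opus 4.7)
The plan is to establish the two implications separately, with $(1)\Rightarrow(2)$ being the substantive direction. For $(2)\Rightarrow(1)$, applying the Hurewicz selection principle to the constant sequence $\mathcal{U}_n=\mathcal{U}$ produces a countable subcover of any given open cover, proving Lindel$\ddot{o}$fness; and $U_{fin}(\mathcal{O},\Gamma)\Rightarrow U_{fin}(\Gamma_F,\Gamma)$ is immediate since every $\gamma_F$-shrinkable $\gamma$-cover is, in particular, an open cover.

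For $(1)\Rightarrow(2)$, I would start with an arbitrary sequence of open covers $(\mathcal{U}_n)$ of $X$, none containing a finite subcover. Using Lindel$\ddot{o}$fness I pass to countable subfamilies $\{U_{n,k}\}_k$, and the increasing-union trick recalled in Section~2 converts these to increasing open $\gamma$-covers $\{U'_{n,k}\}_k$ with $U'_{n,k}=\bigcup_{i\le k}U_{n,i}$. The key step is then to refine each $\{U'_{n,k}\}_k$ to a cover in $\Gamma_F$ whose members sit inside the corresponding $U'_{n,k}$, so that the hypothesis $U_{fin}(\Gamma_F,\Gamma)$ becomes applicable. For this I exploit that $X$ is normal (Lindel$\ddot{o}$f + Tychonoff): the standard shrinking lemma for countable open covers yields open $V_{n,k}$ with $\overline{V_{n,k}}\subseteq U'_{n,k}$ and $\bigcup_k V_{n,k}=X$; cumulating to $V'_{n,k}=\bigcup_{j\le k}V_{n,j}$ preserves the $\gamma$-cover property while keeping $\overline{V'_{n,k}}\subseteq U'_{n,k}$. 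Urysohn's lemma then furnishes continuous $f_{n,k}\colon X\to[0,1]$ equal to $0$ on $\overline{V'_{n,k}}$ and to $1$ on $X\setminus U'_{n,k}$, and the cozero sets $W_{n,k}=\{f_{n,k}<1/2\}$ together with zero-sets $F_{n,k}=\{f_{n,k}\le 1/3\}$ form the required $\gamma_F$-shrinkable $\gamma$-cover contained in $U'_{n,k}$.

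Once $U_{fin}(\Gamma_F,\Gamma)$ delivers finite subfamilies $\mathcal{F}_n\subseteq\{W_{n,k}\}_k$ with $\{\bigcup\mathcal{F}_n\}_n\in\Gamma$, I translate back by letting $k^*_n$ be the largest index appearing in $\mathcal{F}_n$ and setting $\mathcal{G}_n=\{U_{n,1},\dots,U_{n,k^*_n}\}\subseteq\mathcal{U}_n$; the increasing nature of $\{U'_{n,k}\}_k$ ensures $\bigcup\mathcal{G}_n=U'_{n,k^*_n}\supseteq\bigcup\mathcal{F}_n$, so $\{\bigcup\mathcal{G}_n\}_n$ inherits the $\gamma$-cover property and the Hurewicz property follows. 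The principal obstacle I anticipate is the middle step: orchestrating the shrinkage and the Urysohn functions so that the resulting cozero family is simultaneously contained in the original increasing cover, remains a $\gamma$-cover, and admits a $\gamma$-subcover by zero-sets---this is precisely where normality of $X$, derived from the Lindel$\ddot{o}$f and Tychonoff hypotheses, is essential.
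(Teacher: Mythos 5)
Your argument is correct, and it covers both directions; the paper writes out only $(1)\Rightarrow(2)$ and treats $(2)\Rightarrow(1)$ as immediate, exactly as you do. The substantive difference is in how the $\Gamma_F$-refinement is manufactured. The paper performs the cozero/zero-set interpolation \emph{pointwise}, using only complete regularity: for each $x\in U\in\mathcal{U}_n$ it chooses $x\in W_0\subset W_1\subset W_2\subset U$ with $W_0,W_2$ cozero and $W_1$ a zero-set, then invokes Lindel\"{o}fness to extract a countable subfamily of the $W_0$'s, and finally takes increasing finite unions of the corresponding $W_2$'s, shrunk by the increasing unions of the $W_1$'s. You instead extract a countable subcover first, pass to increasing unions, and then shrink the resulting cover \emph{globally} via normality, the countable shrinking lemma, and Urysohn's lemma. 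Both routes produce the same kind of $\gamma_F$-shrinkable refinement by finite unions, and your translation back to $\mathcal{U}_n$ at the end is fine. One caveat on your middle step: the shrinking lemma for a countable open cover that is not point-finite is \emph{not} a consequence of normality alone --- Dowker spaces are precisely the normal spaces in which an increasing countable open cover can fail to admit a closed shrinking. What you actually need is normal plus countably paracompact, which does hold here because a regular Lindel\"{o}f space is paracompact; if you justify the shrinking by citing paracompactness of regular Lindel\"{o}f spaces rather than bare normality, the step is airtight. The paper's pointwise construction sidesteps this issue entirely, needing nothing beyond the Tychonoff hypothesis once Lindel\"{o}fness has been used to make the refined cover countable.
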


\begin{proof} $(1)\Rightarrow(2)$. Let $(\mathcal{U}_n : n\in
\mathbb{N})$ be a sequence of open covers of $X$. For every $n$,
$U\in \mathcal{U}_n$ and $x\in X$ we find co-zero sets
$W_{0,n,U,x}$ and $W_{2,n,U,x}$, and, a zero-set $W_{1,n,U,x}$
such that $x\in W_{0,n,U,x}\subset W_{1,n,U,x}\subset
W_{2,n,U,x}\subset U$. Since $X$ is Lindel$\ddot{o}$f, there is a
sequence $(x^n_k : k\in \mathbb{N})$ such that $X$ is covered be
$\{W_{0,n,U,x^n_k} : k\in \mathbb{N}\}$. Look at the cover
$\mathcal{W}_n$ of $X$ consisting of sets $W^n_k=\bigcup_{i\leq k}
W_{2,n,U,x^n_{i}}$, $k\in \mathbb{N}$. Note that $\mathcal{W}_n\in
\Gamma_F$ because $\bigcup_{i\leq k} W_{1,n,U,x^n_{i}}$ is a
zero-set contained in $W^n_k$, and $\{\bigcup_{i\leq k}
W_{1,n,U,x^n_{i}} : k\in \mathbb{N}\}$ is a $\gamma$-cover of $X$
because even $\{\bigcup_{i\leq k} W_{0,n,U,x^n_{i}} : k\in
\mathbb{N}\}$ is a $\gamma$-cover of $X$.

Now use the property $U_{fin}(\Gamma_F, \Gamma)$ to the sequence
$(\mathcal{W}_n : n\in \mathbb{N})$ together with the fact that
$\mathcal{W}_n$ is a finer cover that $\mathcal{U}_n$ for all $n$.

\end{proof}

For a function space $C_p(X)$, we represent the next selection
principle  $F_{fin}(\mathcal{S},\mathcal{S})$:

 whenever $\mathcal{S}_1$, $\mathcal{S}_2, ... \in \mathcal{S}$, there are finite sets $\mathcal{F}_n\subseteq \mathcal{S}_n$,
$n\in \mathbb{N}$, such that for each $f\in C_p(X)$  there is
$\{\mathcal{F}_{n_k} : k\in \mathbb{N}\}$ such that for a base
neighborhood $\langle f, K, \epsilon
 \rangle $ of $f$  where $\epsilon>0$ and $K=\{x_1, ..., x_m\}$ is a
finite subset of $X$, there is $k'\in \mathbb{N}$ such that for
each $k>k'$ and $\forall$ $j\in \{1,...,m\}$ there is $g\in
\mathcal{F}_{n_k}$ such that $g(x_j)\in (f(x_j)-\epsilon,
f(x_j)+\epsilon)$.

It is clear that the condition of the selection principle
$F_{fin}(\mathcal{S},\mathcal{S})$ can be written more briefly:
whenever $\mathcal{S}_1$, $\mathcal{S}_2, ... \in \mathcal{S}$,
there are finite sets $\mathcal{F}_n\subseteq \mathcal{S}_n$,
$n\in \mathbb{N}$, such that for each $f\in C_p(X)$, $\epsilon>0$
and $K$ is a finite subset of $X$, there is $n'\in \mathbb{N}$
such that for every $n>n'$ $\min\limits_{h\in \mathcal{F}_{n}}
\{|f(x)-h(x)|\}<\epsilon$ for each $x\in K$.

Similarly, $F_{fin}(\Gamma_0,\Gamma_0)$: whenever $\mathcal{S}_1$,
$\mathcal{S}_2, ... \in \Gamma_0$, there are finite sets
$\mathcal{F}_n\subseteq \mathcal{S}_n$, $n\in \mathbb{N}$, such
that for $\epsilon>0$ and $K$ is a finite subset of $X$, there is
$n'\in \mathbb{N}$ such that for every $n>n'$ $\min\limits_{h\in
\mathcal{F}_{n}} \{|h(x)|\}<\epsilon$ for each $x\in K$.

\begin{theorem}\label{th194} For a space $X$, the following statements are
equivalent:

\begin{enumerate}

\item $C_p(X)$ satisfies $F_{fin}(\Gamma_{\bf0},\Gamma_{\bf0})$;

\item $X$ satisfies $U_{fin}(\Gamma_{F}, \Gamma)$.

\end{enumerate}

\end{theorem}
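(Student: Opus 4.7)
The plan is to mirror the proof of Theorem \ref{th198}, replacing each \emph{there exists $n'$} occurrence by the cofinal \emph{for every $n>n'$}, since that is the only difference between $\Omega$-style and $\Gamma$-style selectors on both sides of the correspondence.

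For $(1)\Rightarrow(2)$, I would start with an arbitrary sequence $\{\mathcal{U}_i\}\subset\Gamma_F$, write $\mathcal{U}_i=\{U^m_i : m\in\mathbb{N}\}$ with zero-set witnesses $F(U^m_i)\subset U^m_i$, and use Tychonoff separation to pick $f^m_i\in C(X)$ that is $0$ on $F(U^m_i)$ and $1$ on $X\setminus U^m_i$. Because $\{F(U^m_i) : m\in\mathbb{N}\}$ is a $\gamma$-cover of zero-sets, $\mathcal{K}_i=\{f^m_i : m\in\mathbb{N}\}\in\Gamma_{\bf 0}$. Applying $F_{fin}(\Gamma_{\bf 0},\Gamma_{\bf 0})$ to $(\mathcal{K}_i)$ yields finite sets $F_i\subset\mathcal{K}_i$ with the cofinal property. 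To recover a $\gamma$-cover of $X$ from the associated finite subfamilies $\mathcal{V}_i\subset\mathcal{U}_i$, I would pick an arbitrary $x\in X$, take $K=\{x\}$ and $\epsilon=1/2$: the selection principle gives $n'$ such that for every $n>n'$ some $h\in F_n$ satisfies $|h(x)|<1/2$, and by construction of the $f^m_i$ this forces $x\in U$ for the associated $U\in\mathcal{V}_n$. Hence $\{\bigcup\mathcal{V}_n : n\in\mathbb{N}\}$ is a $\gamma$-cover.

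For $(2)\Rightarrow(1)$, given $A_i\in\Gamma_{\bf 0}$ I enumerate $A_i=\{f_{i,m}\}$ and set $U_{i,m}=f_{i,m}^{-1}(-1/i,1/i)$ with zero-set core $F_{i,m}=f_{i,m}^{-1}[-1/(i+1),1/(i+1)]$, discarding the trivial case $U_{i,m}=X$ exactly as in the proof of Theorem \ref{th198}. Convergence of $A_i$ to ${\bf 0}$ guarantees that both $\{U_{i,m}\}$ and $\{F_{i,m}\}$ are $\gamma$-covers, so $\mathcal{U}_i\in\Gamma_F$. Then $U_{fin}(\Gamma_F,\Gamma)$ produces finite $\mathcal{V}_i\subset\mathcal{U}_i$ whose unions form a $\gamma$-cover of $X$; reading the $f_{i,m}$ off from the indices of $\mathcal{V}_i$ gives the desired $\mathcal{F}_i\subset A_i$. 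To verify the selector, fix a finite $K=\{x_1,\dots,x_s\}$ and $\epsilon>0$, choose $i_0$ with $1/i_0<\epsilon$, and use the $\gamma$-cover property at each $x_j$ to get an index $n_j$ beyond which $x_j\in\bigcup\mathcal{V}_n$; the required $n'=\max(i_0,n_1,\dots,n_s)$ then makes $\min_{h\in\mathcal{F}_n}|h(x_j)|<1/i<\epsilon$ simultaneously for all $j$ and all $n>n'$.

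The main obstacle is really just bookkeeping: the only non-mechanical step is consolidating the finitely many "eventually covered" statements (one per point of $K$) into a single threshold $n'$ uniform over $K$, which is immediate from finiteness of $K$. The other delicate but already-handled point is the standing assumption $U_{i,m}\neq X$, needed to keep the covers nontrivial; the degenerate alternative yields a sequence uniformly converging to ${\bf 0}$, from which the conclusion is trivial. Beyond these, the argument is a direct translation of the $\Omega$-case, using that $\min_{h\in\mathcal{F}_n}|h(x)|<1$ combined with the $0/1$ construction of the test functions encodes membership in $\bigcup\mathcal{V}_n$.
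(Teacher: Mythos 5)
Your proposal is correct and takes essentially the same route as the paper's proof: the same $0/1$ test functions vanishing on the zero-set shrinking $F(U^m_i)$ and equal to $1$ off $U^m_i$ in one direction, and the same cozero/zero pair $f^{-1}(-\frac{1}{i},\frac{1}{i})\supset f^{-1}[-\frac{1}{i+1},\frac{1}{i+1}]$ in the other. The only differences are cosmetic bookkeeping (testing the $\gamma$-cover at single points and writing the threshold $\max(i_0,n_1,\dots,n_s)$ explicitly, where the paper leaves the $1/i<\epsilon$ requirement implicit).
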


\begin{proof} $(1)\Rightarrow(2)$.  Let $\{\mathcal{U}_i\}\subset \Gamma_F$, $\mathcal{U}_i=\{
U^{m}_i\}_{m\in \mathbb{N}}$ for each $i\in \mathbb{N}$. We
consider a
 subset $\mathcal{S}_i$ of $C_p(X)$  where

$\mathcal{S}_i =\{ f^m_i\in C(X) : f^m_i\upharpoonright
F(U^{m}_i)=0$ and $f^m_i\upharpoonright (X\setminus U^{m}_i)=1$
for $m \in \mathbb{N} \}$.

Since $\mathcal{F}_i=\{F(U^{m}_i): m\in \mathbb{N}\}$ is a
$\gamma$-cover of $X$, we have that $\mathcal{S}_i$ converge to
${\bf 0}$, i.e. $\mathcal{S}_i\in \Gamma_0$ for each $i\in
\mathbb{N}$.

Since  $C(X)$ satisfies $F_{fin}(\Gamma_{\bf0},\Gamma_{\bf0})$,
there is a sequence $\{\mathcal{F}_i\}_{i\in
\mathbb{N}}=\{f^{m_1}_{i},..., f^{m_{k(i)}}_{i} :
i\in\mathbb{N}\}$ such that for each $i$, $\mathcal{F}_i\subseteq
\mathcal{S}_i$, and for a base neighborhood $\langle f, K,
\epsilon
 \rangle $ of $f={\bf 0}$  where $\epsilon>0$
and $K=\{x_1, ..., x_k\}$ is a finite subset of $X$, there is
$n'\in \mathbb{N}$ such that for every $n>n'$ and $\forall$ $j\in
\{1,...,k\}$ there is $g\in \mathcal{F}_{n}$ such that $g(x_j)\in
(f(x_j)-\epsilon, f(x_j)+\epsilon)$.

Consider the sequence $\{W_i\}_{i\in \mathbb{N}}=\{U^{m_1}_{i},
...,U^{m_{k(i)}}_{i} : i\in \mathbb{N}\}$.

(a). $W_i \subset \mathcal{U}_{i}$.

(b). $\{\bigcup W_i: i\in \mathbb{N}\}$ is a $\gamma$-cover of
$X$.

 Let $K=\{x_1,...,x_s\}$ be a finite subset of $X$ and $\langle {\bf 0},
K, \frac{1}{2} \rangle $ be a base neighborhood of ${\bf 0}$, then
there exists $i_0\in \mathbb{N}$ such that for each $i>i_0$ and

$j\in \{1,...,s\}$ there is $g\in \mathcal{F}_{i}$ such that
$g(x_j)\in (-\frac{1}{2}, \frac{1}{2})$.

 It follows that
$K\subset \bigcup\limits_{j=1}^{k(i)} U^{m_j}_{i}$ for $i>i_0$. We
thus get $X$ satisfies $U_{fin}(\Gamma_{F}, \Gamma)$.

$(2)\Rightarrow(1)$.  Fix $\{S_i: i\in \mathbb{N}\}\subset
\Gamma_0$ where $S_i=\{f^{i}_k: k\in \mathbb{N}\}$ for each $i\in
\mathbb{N}$.

For each $i,k\in \mathbb{N}$, we put $U_{i,k}=\{x\in X :
|f^{i}_k(x)|<\frac{1}{i}\}$, $Z_{i,k}=\{x\in X :
|f^{i}_k(x)|\leq\frac{1}{i+1}\}$.

Each $U_{i,k}$ (resp., $Z_{i,k}$) is a cozero-set (resp.,
zero-set) in $X$ with $Z_{i,k}\subset U_{i,k}$. Let
$\mathcal{U}_i=\{ U_{i,k} : k\in \mathbb{N}\}$ and let
$\mathcal{Z}_i=\{ Z_{i,k} : k\in \mathbb{N}\}$. So without loss of
generality, we may assume $U_{i,k}\neq X$ for each $i,k\in
\mathbb{N}$. We can easily check that the condition $f^{i}_k
\rightarrow \bf{0}$ ($k \rightarrow \infty $) implies that
$\mathcal{Z}_i$ is a $\gamma$-cover of $X$.

Since  $X$ satisfies $U_{fin}(\Gamma_{F}, \Gamma)$ there is a
sequence $\{\mathcal{F}_i\}_{i\in
\mathbb{N}}=(U_{i,k_1},...,U_{i,k_i}: i\in\mathbb{N})$ such that
for each $i$, $\mathcal{F}_i \subset \mathcal{U}_i$, and
$\{\bigcup \mathcal{F}_i: i\in\mathbb{N} \}$ is an element of
$\Gamma$.

Let $K=\{x_1,...,x_s\}$ be a finite subset of $X$, $\epsilon>0$,
and $\langle {\bf 0}, K, \epsilon \rangle $ be a base neighborhood
of ${\bf 0}$, then there exists $i'\in \mathbb{N}$ such that for
every $i>i'$ $K\subset \bigcup \mathcal{F}_i$. It follow that for
every $i>i'$ and $j\in \{1,...,s\}$ there is $g\in
\mathcal{S}_{i}$ such that $g(x_j)\in (-\epsilon, \epsilon)$. So
$C_p(X)$ satisfies $F_{fin}(\Gamma_{\bf0},\Gamma_{\bf 0})$.

\end{proof}

\begin{theorem}\label{th199} For a Lindel$\ddot{o}$f space $X$, the following statements are
equivalent:

\begin{enumerate}

\item $C_p(X)$ satisfies $F_{fin}(\Gamma_{\bf0},\Gamma_{\bf0})$;

\item $X$ has the Hurewicz property.

\end{enumerate}

\end{theorem}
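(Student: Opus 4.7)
The plan is to derive Theorem \ref{th199} as a direct consequence of the two theorems already established in this section, namely Theorem \ref{th194} (which connects $F_{fin}(\Gamma_{\bf0},\Gamma_{\bf0})$ with the covering property $U_{fin}(\Gamma_F, \Gamma)$) and Theorem \ref{thur} (which says that under the Lindel\"of assumption, $U_{fin}(\Gamma_F, \Gamma)$ is equivalent to the Hurewicz property $U_{fin}(\mathcal{O}, \Gamma)$). Since both equivalences are already available, no new combinatorial construction on covers or functions is required.

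For $(1) \Rightarrow (2)$, I would argue as follows. Assume $C_p(X)$ satisfies $F_{fin}(\Gamma_{\bf0},\Gamma_{\bf0})$. By Theorem \ref{th194} applied to $X$, we obtain that $X$ satisfies $U_{fin}(\Gamma_F, \Gamma)$. Since $X$ is assumed to be Lindel\"of, the hypothesis of Theorem \ref{thur} is met, and so $X$ has the Hurewicz property.

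For the converse $(2) \Rightarrow (1)$, assume $X$ has the Hurewicz property, i.e.\ $X$ satisfies $U_{fin}(\mathcal{O}, \Gamma)$. Because every $\gamma_F$-shrinkable $\gamma$-cover is in particular an open cover, we have $\Gamma_F \subseteq \mathcal{O}$, and therefore $U_{fin}(\mathcal{O}, \Gamma)$ immediately implies $U_{fin}(\Gamma_F, \Gamma)$. Applying Theorem \ref{th194} in the other direction yields $C_p(X) \models F_{fin}(\Gamma_{\bf0},\Gamma_{\bf0})$, as desired.

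Since everything is a direct appeal to previously proved results, there is no real obstacle in this proof; the only point worth checking is that the Lindel\"of assumption is genuinely only needed for the implication $(1) \Rightarrow (2)$ (to invoke Theorem \ref{thur}), while the reverse implication is unconditional. Thus the proof will be a short two-paragraph chain of references, and the paper's existing chain of equivalences does all the heavy lifting.
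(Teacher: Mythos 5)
Your proposal is correct and is exactly the intended argument: the paper states Theorem \ref{th199} without proof as an immediate corollary of Theorem \ref{th194} together with Theorem \ref{thur}, which is precisely the chain you describe. Your observation that the converse direction only needs the monotonicity $\Gamma_F\subseteq\mathcal{O}$ (rather than the unproved direction of Theorem \ref{thur}) and that Lindel\"ofness is used only for $(1)\Rightarrow(2)$ is accurate.
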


 A space $X$ has {\it $Velichko$ property} ($X$ $\models$ $V$), if there
 exist  a condensation (one-to-one continuous mapping) $f: X \mapsto Y$ from the space $X$ on a
 separable metric space $Y$, such that $f(U)$ is an $F_{\sigma}$-set
 of $Y$ for any cozero-set $U$ of $X$.

\begin{theorem} \label{th38} (Velichko \cite{vel}). Let $X$ be a space. A space $C_p(X)$ is
sequentially separable iff  $X$ $\models$ $V$.
\end{theorem}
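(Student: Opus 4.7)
The plan is to prove the two implications separately, both pivoting on the following standard fact: on a metric space $Y$, a function $\phi : Y \to \mathbb{R}$ has $F_\sigma$ preimages of open sets if and only if $\phi$ is a pointwise limit of a sequence of continuous functions (Baire class~$1$). This is what bridges the $F_\sigma$-preservation in property $V$ with sequential pointwise convergence of continuous functions on $X$.

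For $(\Rightarrow)$, let $D = \{f_n : n \in \mathbb{N}\}$ be a countable sequentially dense subset of $C_p(X)$, and form the diagonal map $e : X \to \mathbb{R}^{\mathbb{N}}$ by $e(x) := (f_n(x))_n$. The map is continuous. For distinct $x, x' \in X$, Tychonoff separation gives $g \in C(X)$ with $g(x) \neq g(x')$, and sequential density produces $f_{n_k} \to g$ pointwise, forcing $f_{n_k}(x) \neq f_{n_k}(x')$ for large $k$; hence $e$ is injective. So $Y := e(X) \subseteq \mathbb{R}^{\mathbb{N}}$ is separable metric and $e$ is a condensation. For a cozero-set $U = \{g \neq 0\}$ of $X$, pick $f_{n_k} \to g$ pointwise and define $\phi : Y \to \mathbb{R}$ by $\phi(y) := \lim_k y_{n_k}$; the limit exists on all of $Y$ because $\phi(e(x)) = g(x)$. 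Then $\phi$ is a pointwise limit on the metric space $Y$ of the continuous coordinate projections, hence Baire class~$1$, so $e(U) = \{\phi \neq 0\} = \{\phi > 0\} \cup \{\phi < 0\}$ is $F_\sigma$ in $Y$.

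For $(\Leftarrow)$, fix the condensation $e : X \to Y$ witnessing $V$, embed $Y$ in $[0,1]^{\mathbb{N}}$ via continuous coordinate functions $h_n : Y \to [0,1]$, and set $\tilde h_n := h_n \circ e \in C(X)$. Take $D$ to be the countable $\mathbb{Q}$-algebra generated by $\{\tilde h_n\}_{n \in \mathbb{N}} \cup \{1\}$; a Stone--Weierstrass argument (using compactness of the Hilbert cube and injectivity of $e$) shows $D$ is dense in $C_p(X)$. To upgrade density to sequential density, given $f \in C(X)$, the hypothesis $V$ makes its level sets $\{f > a\}, \{f < a\}$ into $F_\sigma$ subsets of $Y$, so $f$ regarded on $Y$ is of Baire class~$1$. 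By Baire's theorem on metric spaces, $f = \lim_k g_k$ pointwise with $g_k \in C(Y) \subseteq C(X)$, and each $g_k$ is approximable by elements of $D$; a diagonal construction, controlled by the $F_\sigma$-decompositions of the level sets of $f$, then produces a single sequence from $D$ converging pointwise to $f$.

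The main obstacle is this final diagonalization. Pointwise sequential closure does not iterate transitively on uncountable spaces, so the two-stage ``$g_k \to f$ pointwise on $Y$'' and ``$D$-elements $\to g_k$ pointwise'' do not automatically combine into a single $D$-sequence approximating $f$. Overcoming this uses $V$ essentially: one exploits the $F_\sigma$-decomposition of each cozero-set of $X$ into countably many closed sets of $Y$ to control the approximants across all points of $X$ simultaneously, rather than attempting to diagonalize pointwise. The forward direction is clean by comparison, being a direct application of the Baire-class-$1$ characterization of $F_\sigma$ sets to the coordinate projections.
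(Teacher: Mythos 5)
The paper itself offers no proof of this statement (it is quoted from Velichko's paper), so your attempt has to stand on its own. Your forward implication does: the diagonal map $e(x)=(f_n(x))_n$ built from a countable sequentially dense set is a condensation onto a separable metrizable $Y\subseteq\mathbb{R}^{\mathbb{N}}$, and for a cozero set $U=\{g\neq 0\}$ the function $\phi(y)=\lim_k y_{n_k}$ is well defined on $Y$, equals $g\circ e^{-1}$, and is a pointwise limit of continuous functions, so $e(U)=\{\phi>0\}\cup\{\phi<0\}$ is $F_\sigma$ in $Y$. That is a complete and correct verification of property $V$.

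The backward implication, however, is not a proof: the entire difficulty of Velichko's theorem is concentrated in your last step, and there you assert rather than argue. The sentence ``a diagonal construction, controlled by the $F_\sigma$-decompositions of the level sets of $f$, then produces a single sequence from $D$ converging pointwise to $f$'' is exactly the statement to be proved, and you yourself concede in the next paragraph that the naive merging of the two limits ($g_k\to f$ pointwise on $Y$, elements of $D$ converging to each $g_k$) is illegitimate --- correctly so, since pointwise sequential closure is not idempotent (iterating it climbs the Baire hierarchy), and density of $D$ in $C_p(X)$ is much weaker than sequential density. Note also that your intermediate claim ``each $g_k$ is approximable by elements of $D$'' helps only if the approximation is uniform (then errors $2^{-k}$ would preserve pointwise convergence), and such uniform approximation from a fixed countable family is unavailable on a noncompact $Y$: already for $Y=\mathbb{N}$ the space of bounded continuous functions with the sup-metric is $\ell_\infty$, which is nonseparable. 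So as written the argument stops precisely where the theorem begins. What is missing is a per-$f$ construction that genuinely uses the $F_\sigma$ images of the cozero sets $\{r<f<s\}$, $r,s\in\mathbb{Q}$: one must manufacture, from a countable family fixed in advance (your rational polynomials in the cube coordinates will do, provided one exploits compactness of $[0,1]^{\mathbb{N}}$, where that family is uniformly dense, or alternatively rational Urysohn-type functions attached to a countable base of $Y$), continuous functions whose compositions with $e$ converge to $f$ at every point of $X$, the $F_\sigma$ decompositions supplying the closed sets on which the approximation is controlled. Supplying that construction is the actual content of the hard direction, and it is absent from your outline.
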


\medskip

\begin{theorem}\label{th153} For a space $X$ with $X$ $\models$
$V$, the following statements are equivalent:

\begin{enumerate}

\item $C_p(X)$ satisfies $F_{fin}(\mathcal{S},\mathcal{S})$;

\item $X$ satisfies $U_{fin}(\Gamma_F, \Gamma)$;

\item  $C_p(X)$ satisfies $F_{fin}(\Gamma_{\bf0}, \Gamma_{\bf0})$;

\item  $C_p(X)$ satisfies $F_{fin}(\mathcal{S}, \Gamma_{\bf0})$.

\end{enumerate}

\end{theorem}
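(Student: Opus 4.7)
The plan is to close the cycle $(1)\Rightarrow(2)\Rightarrow(3)\Rightarrow(4)\Rightarrow(1)$. Since $(2)\Leftrightarrow(3)$ is already Theorem \ref{th194}, only three implications remain, and of these only $(4)\Rightarrow(1)$ will really need the Velichko hypothesis $X\models V$; the other two follow the pattern of Theorem \ref{th195} but with $\Gamma$-cofinality replacing the $\Omega$-type condition.

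For $(1)\Rightarrow(2)$ I imitate the $\Omega$-case of Theorem \ref{th195}: given $\{\mathcal{U}_i\}\subset\Gamma_F$ with $\mathcal{U}_i=\{U^j_i:j\in\mathbb{N}\}$, Lemma \ref{lemma} provides the sequentially dense set $S_i=\{f\in C(X): f\upharpoonright(X\setminus U^j_i)\equiv 1\text{ for some }j\in\mathbb{N}\}$. Applying $F_{fin}(\mathcal{S},\mathcal{S})$ to $\{S_i\}$ at $f={\bf 0}$ with $\epsilon=\tfrac{1}{2}$ yields finite $F_i\subseteq S_i$ satisfying, for every finite $K\subset X$, the inequality $\min_{h\in F_i}|h(x)|<\tfrac{1}{2}$ at each $x\in K$ for all sufficiently large $i$. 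Because $h\in S_i$ forces $h\equiv 1$ outside some $U^j_i$, this inequality at $x$ demands $x\in U^j_i$, so extracting the used indices gives finite $\mathcal{W}_i\subseteq\mathcal{U}_i$ with $K\subseteq\bigcup\mathcal{W}_i$ for cofinitely many $i$ — exactly $U_{fin}(\Gamma_F,\Gamma)$. The implication $(3)\Rightarrow(4)$ is immediate: since ${\bf 0}\in[\mathcal{S}_i]_{seq}$ for each sequentially dense $\mathcal{S}_i$, pick $T_i\subseteq\mathcal{S}_i$ with $T_i\in\Gamma_{{\bf 0}}$ and apply (3) to $\{T_i\}$; the resulting finite $\mathcal{F}_i\subseteq T_i\subseteq\mathcal{S}_i$ realises $F_{fin}(\mathcal{S},\Gamma_{{\bf 0}})$.

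The main obstacle is $(4)\Rightarrow(1)$. By Theorem \ref{th38}, $X\models V$ produces a countable sequentially dense $D=\{d_n:n\in\mathbb{N}\}\subset C_p(X)$. For each $n$ the translated sequence $\{\mathcal{S}_i-d_n\}_i$ again belongs to $\mathcal{S}$, so applying (4) yields finite $\mathcal{F}^{(n)}_i\subseteq\mathcal{S}_i-d_n$ with the $\Gamma_{{\bf 0}}$-cofinal property. Set
$$\mathcal{F}_i=\bigcup_{n\le i}(\mathcal{F}^{(n)}_i+d_n)\subseteq\mathcal{S}_i,$$
a finite selection. To verify $F_{fin}(\mathcal{S},\mathcal{S})$, fix $f\in C_p(X)$ and a base neighbourhood $\langle f,K,\epsilon\rangle$: sequential density of $D$ provides $d_{n_0}$ with $|d_{n_0}(x)-f(x)|<\epsilon/2$ on $K$, and the $n_0$-level cofinal property delivers, for all sufficiently large $i$, some $g\in\mathcal{F}^{(n_0)}_i$ with $|g(x)|<\epsilon/2$ on $K$; then $h=g+d_{n_0}\in\mathcal{F}_i\cap\langle f,K,\epsilon\rangle$. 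The delicate point here is coordinating the two indices $n$ (running over the dense sequence) and $i$ (running over the input covers) into a single flat selection $\mathcal{F}_i$ that simultaneously approximates every $f\in C_p(X)$; the truncation $n\le i$ is the device that keeps each $\mathcal{F}_i$ finite while still making every $d_n$ — and hence, by density, every $f$ — eventually reachable.
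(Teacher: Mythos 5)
Your proposal is correct and follows essentially the same route as the paper: Lemma \ref{lemma} for $(1)\Rightarrow(2)$, Theorem \ref{th194} for $(2)\Rightarrow(3)$, and the Velichko characterization (Theorem \ref{th38}) to produce a countable sequentially dense set whose translates reduce $(4)\Rightarrow(1)$ to the $\Gamma_{\bf 0}$-condition at $\bf 0$. The only divergence is bookkeeping in $(4)\Rightarrow(1)$ — you apply (4) once per $d_n$ and merge the selections diagonally via the truncation $n\le i$, whereas the paper applies it a single time to the sequence $(S_n-h_n)_n$ and then passes to a convergent subsequence $h_{n_j}\to h$; both devices work.
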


\begin{proof} $(1)\Rightarrow(2)$. Let $\mathcal{U}_i=\{U^j_i: j\in \mathbb{N}\}\in \Gamma_F$ for
each $i\in \mathbb{N}$. Then, by Lemma \ref{lemma}, each
$S_i=\{f\in C(X): f\upharpoonright(X\setminus U^j_i)\equiv 1$ for
some $m\in \mathbb{N}\}$ is sequentially dense in $C_p(X)$.

Since  $C(X)$ satisfies $U_{fin}(\mathcal{S},\mathcal{S})$, there
is a sequence $\{\mathcal{F}_i\}=\{f^{m_1}_{i}, ...,f^{m_s}_{i} :
i\in\mathbb{N}\}$ such that for each $i$, $\mathcal{F}_i \subset
\mathcal{S}_i$, and for each $f\in C_p(X)$  there is
$\{\mathcal{F}_{n_k} : k\in \mathbb{N}\}$ such that for a base
neighborhood $\langle f, K, \epsilon
 \rangle $ of $f$  where $\epsilon>0$ and $K=\{x_1, ..., x_m\}$ is a
finite subset of $X$, there is $k'\in \mathbb{N}$ such that for
each $k>k'$ and $\forall$ $j\in \{1,...,m\}$ there is $g\in
\mathcal{F}_{n_k}$ such that $g(x_j)\in (f(x_j)-\epsilon,
f(x_j)+\epsilon)$.

Then for $f={\bf 0}$ there is $\{F_{i_k} : k\in \mathbb{N}\}$ such
that for a base neighborhood $\langle f, K, \epsilon
 \rangle $ of $f$  where $\epsilon>0$ and $K=\{x_1, ..., x_m\}$ is a
finite subset of $X$, there is $k'\in \mathbb{N}$ such that for
each $k>k'$ and $\forall$ $j\in \{1,...,m\}$ there is $g\in
F_{i_k}$ such that $g(x_j)\in (-\epsilon, \epsilon)$.

Let $\epsilon=\frac{1}{2}$. Consider a sequence $\{Q_k\}_{k\in
\mathbb{N}\setminus\{k'\}}=\{U^{m_1}_{i_k},...,U^{m_s}_{i_k}: k\in
\mathbb{N}\setminus\{k'\} \}$ for corresponding to
$\{F_{i_k}\}=\{f^{m_1}_{i_k}, ...,f^{m_s}_{i_k} : k\in
\mathbb{N}\setminus\{k'\} \}$.

(a). $Q_k \subset \mathcal{U}_{i_k}$ for $k\in
\mathbb{N}\setminus\{k'\}$.

(b). $\{\bigcup Q_k: k\in \mathbb{N}\setminus\{k'\} \}$ is a
$\gamma$-cover of $X$. We thus get $X$ satisfies
$U_{fin}(\Gamma_F, \Gamma)$.

$(2)\Rightarrow(3)$. By Theorem \ref{th194}.

$(3)\Rightarrow(4)$ is immediate.

$(4)\Rightarrow(1)$. For each $n\in \mathbb{N}$, let $S_n$ be a
sequentially dense subset of $C_p(X)$ and let $\{h_n: n\in
\mathbb{N}\}$ be sequentially dense in $C_p(X)$. Take a sequence
$\{f^m_n: m\in \mathbb{N}\}\subset S_n$ such that $f^m_n\mapsto
h_n$ ($m\mapsto \infty$). Then $f^m_n-h_n\mapsto \bf{0}$
($m\mapsto \infty$). Hence, there exist
$\mathcal{F}_n=\{f^{m_1}_n, ...,f^{m_{k(n)}}_n\}\subset S_n$ such
that $\{\bigcup \{f^{m_1}_n-h_n, ...,f^{m_{k(n)}}_n-h_n\} : n\in
\mathbb{N}\}\in \Gamma_0$, i.e. for a base neighborhood $\langle
f, K, \epsilon
 \rangle $ of $f={\bf 0}$  where $\epsilon>0$ and $K=\{x_1, ..., x_m\}$ is a finite subset of $X$, there is
$n'\in \mathbb{N}$ such that for each $n>n'$ and $\forall$ $j\in
\{1,...,m\}$ there is $g\in \{f^{m_1}_n-h_n,
...,f^{m_{k(n)}}_n-h_n\}$ such that $g(x_j)\in (-\epsilon,
\epsilon)$.

 Let $h\in C_p(X)$ and take a sequence $\{h_{n_j}:
j\in \mathbb{N}\}\subset \{h_n:n\in \mathbb{N}\}$ converging to
$h$. Let $K=\{x_1,...,x_m\}$ be a finite subset of $X$ and
$\epsilon>0$. Consider a base neighborhood $\langle h, K, \epsilon
 \rangle $ of $h$. Then there is $j'\in \mathbb{N}$ such that $h_{n_j}\in \langle h, K,
\frac{\epsilon}{2} \rangle $ and  $\forall$ $s\in \{1,...,m\}$
there is $g\in \{f^{m_1}_{n_j}-h_{n_j},
...,f^{m_{k(n_{j})}}_{n_j}-h_{n_j}\}$ such that $g(x_s)\in
(-\frac{\epsilon}{2}, \frac{\epsilon}{2})$ for $j>j'$. It follows
that for each $s\in \{1,...,m\}$ there is $l(j)\in
\overline{1,k(n_{j})}$ such that
$((f^{m_{l(j)}}_{n_j}-h_{n_j})+(h_{n_j}-h))(x_s) \in (-\epsilon,
\epsilon)$ for $j>j'$. Hence $C_p(X)$ satisfies
$F_{fin}(\mathcal{S},\mathcal{S})$.

\end{proof}

\begin{theorem}\label{th1531} For a separable metrizable space $X$, the following statements are
equivalent:

\begin{enumerate}

\item $C_p(X)$ satisfies $F_{fin}(\mathcal{S},\mathcal{S})$;

\item $X$ satisfies $U_{fin}(\mathcal{O}, \Gamma)$ [Hurewicz
property];

\item  $C_p(X)$ satisfies $F_{fin}(\Gamma_{\bf0}, \Gamma_{\bf0})$;

\item  $C_p(X)$ satisfies $F_{fin}(\mathcal{S}, \Gamma_{\bf0})$.

\end{enumerate}

\end{theorem}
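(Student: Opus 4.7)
The plan is to deduce Theorem \ref{th1531} essentially as a corollary of the two preceding theorems, by verifying that a separable metrizable space falls under the hypotheses of both simultaneously. Concretely, I will show that a separable metrizable space $X$ is Lindel\"of and satisfies the Velichko property $V$; then Theorem \ref{th153} gives the equivalence of (1), (3), (4) with $U_{fin}(\Gamma_F,\Gamma)$, and Theorem \ref{thur} converts $U_{fin}(\Gamma_F,\Gamma)$ (together with Lindel\"ofness) into the Hurewicz property (2).

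First I would record the two structural observations about $X$. A separable metrizable space is second countable, hence Lindel\"of and Tychonoff, so the hypothesis of Theorem \ref{thur} is met. For the Velichko property, take the identity condensation $\mathrm{id}\colon X\to X$; the codomain is separable metric by assumption, and in any metrizable space every open set (in particular, every cozero-set) is $F_\sigma$, since every closed set is a $G_\delta$. Thus $X\models V$, and the hypothesis of Theorem \ref{th153} is also met.

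Next I would assemble the equivalences. By Theorem \ref{th153} applied to $X$, statements (1), (3), (4) of the present theorem are each equivalent to $X$ satisfying $U_{fin}(\Gamma_F,\Gamma)$. By Theorem \ref{thur} applied to $X$, the Hurewicz property (i.e. $U_{fin}(\mathcal{O},\Gamma)$) is equivalent to the conjunction of Lindel\"ofness and $U_{fin}(\Gamma_F,\Gamma)$; but since Lindel\"ofness is automatic for separable metrizable $X$, statement (2) is equivalent to $U_{fin}(\Gamma_F,\Gamma)$ as well. Chaining these equivalences yields $(1)\Leftrightarrow(2)\Leftrightarrow(3)\Leftrightarrow(4)$.

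Since the substantive content is already packaged in Theorems \ref{th153} and \ref{thur}, there is no real obstacle; the only point that requires a line of verification is the Velichko property for separable metrizable spaces, which reduces to the standard fact that open sets in metrizable spaces are $F_\sigma$. The proof in the paper can therefore be written in just a few lines, citing Theorem \ref{th153} and Theorem \ref{thur} and noting these two automatic facts about $X$.
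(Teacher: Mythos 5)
Your proof is correct and is exactly the intended derivation: the paper states Theorem \ref{th1531} without proof as an immediate consequence of Theorems \ref{th153} and \ref{thur}, which is what you supply, including the two needed observations that a separable metrizable space is Lindel\"of and satisfies $X\models V$ (via the identity condensation, since open sets in metrizable spaces are $F_\sigma$). Nothing is missing.
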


\medskip

\medskip
Recall that a space $X$ is said to be Rothberger \cite{rot} (or,
\cite{mil}) if for every sequence $(\mathcal{U}_n : n\in \omega)$
of open covers of $X$, there is a sequence $(V_n : n\in
\mathbb{N})$ such that for each $n$, $V_n\in \mathcal{U}_n$, and
$\{V_n : n\in\mathbb{N} \}$ is an open cover of $X$.

\medskip
A space $X$ is said to be Menger \cite{hur1} (or, \cite{sash}) if
for every sequence $(\mathcal{U}_n : n\in \mathbb{N})$ of open
covers of $X$, there are finite subfamilies $\mathcal{V}_n\subset
\mathcal{U}_n$ such that $\bigcup \{\mathcal{V}_n : n\in
\mathbb{N} \}$ is a cover of $X$.

 Every $\sigma$-compact space is Menger,
and a Menger space is Lindel$\ddot{o}$f.

\medskip
In \cite{os4}, we gave the functional characterizations of the
Rothberger and Menger properties.

\medskip

Recall that if $C_p(X)$ and $C_p(Y)$ are homeomorphic (linearly
homeomorphic, uniformly homeomorphic), we say that the spaces $X$
and $Y$ are $t$-equivalent ($l$-equivalent, $u$-equivalent). The
properties preserved by $t$-equivalence ($l$-equivalence,
$u$-equivalence) we call $t$-invariant ($l$-invariant,
$u$-invariant) \cite{arh2}.

\medskip

{\bf Question 1.} Is the Hurewicz (Rothberger, Menger)  property
$t$-invariant? $l$-invariant?  $u$-invariant?

\medskip



\bibliographystyle{model1a-num-names}
\bibliography{<your-bib-database>}







\end{document}